\newtheorem{theorem}{Theorem}
\newtheorem{lemma}{Lemma}
\newtheorem{corollary}{Corollary}
\newcommand{\ceil}[1]{\ensuremath{\protect\lceil#1\rceil}}
\newcommand{\half}{\ensuremath{\protect\tfrac{1}{2}}}
\begin{document}

\title[Colouring the Square of the Cartesian Product of Trees]{Colouring the Square of the\\ Cartesian Product of Trees}

\author{David R.\ Wood}
\address{\newline Department of Mathematics and
    Statistics\newline The University of Melbourne}
\email{woodd@unimelb.edu.au} 
\thanks{Supported by a QEII Research  Fellowship from the Australian Research Council.}

\keywords{cartesian product, colouring, square  graph}

\subjclass{05C15 Coloring of graphs}

\begin{abstract}
We prove upper and lower bounds on the chromatic number 
of the square of the cartesian product of trees. The bounds 
are equal if each tree has even maximum degree. 
\end{abstract}

\maketitle

\section{Introduction}

This paper studies colourings of the square of cartesian products of
trees. For simplicity we assume that a tree has at least one edge. 
  
For our purposes, a \emph{colouring} of a graph $G$ is a function
$c:V(G)\rightarrow\mathbb{Z}$ such that $c(v)\neq c(w)$ for every edge $vw$ of
$G$. The \emph{square} graph $G^2$ of $G$ has vertex set $V(G)$, where
two vertices are adjacent in $G^2$ whenever they are adjacent in $G$
or have a common neighbour in $G$. A colouring of $G^2$ corresponds to a colouring
of $G$, such that in addition, vertices with a common neighbour in $G$
are assigned distinct colours. 

Let $[a,b]:=\{a,a+1,\dots,b\}$. The \emph{cartesian product} of graphs
$G_1,\dots,G_d$ is the graph $G_1\square\cdots\square G_d$ with vertex
set $\{(v_1,\dots,v_d):v_i\in V(G_i)\}$, where vertices
$v=(v_1,\dots,v_d)$ and $w=(w_1,\dots,w_d)$ are adjacent whenever
$v_iw_i\in E(G_i)$ for some $i\in[1,d]$, and $v_j=w_j$ for all $j\neq
i$. In this case, $vw$ is in \emph{dimension} $i$. Let $\Delta(G)$ be
the maximum degree of $G$. 

\begin{theorem}
  \label{Main} Let $T_1,\dots,T_d$ be trees.  Let $G:=T_1\square
  T_2\square\cdots\square T_d$.  Then $$1+\sum_{i=1}^d \Delta(T_i)
  \leq \chi(G^2)\leq 1+ 2\sum_{i=1}^d
  \ceil{\half\Delta(T_i)}\enspace.$$
\end{theorem}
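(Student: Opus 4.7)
The plan is to handle the lower and upper bounds separately.

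For the lower bound, pick a vertex $x_i \in V(T_i)$ of maximum degree in each tree and consider $x := (x_1,\dots,x_d)$. Its $G$-degree is $\sum_i \Delta(T_i)$, and any two of its $G$-neighbours share $x$ as a common neighbour, so $\{x\} \cup N_G(x)$ is a clique in $G^2$ of size $1 + \sum_i \Delta(T_i)$.

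For the upper bound, set $k_i := \ceil{\half \Delta(T_i)}$, $k := \sum_i k_i$, and $C := 2k+1$. I would take the colouring to be $c(v) := \sum_i f_i(v_i) \pmod{C}$, where each $f_i : V(T_i) \to \mathbb{Z}_C$ is constructed so that every edge-difference $f_i(v_i) - f_i(w_i)$ lies in a prescribed symmetric ``block'' $B_i \subset \mathbb{Z}_C$. Choosing $s_i := 1 + \sum_{\ell<i} k_\ell$ and $B_i := \{\pm(s_i + j) : 0 \leq j < k_i\}$ (in the symmetric representation $\{-k,\dots,k\}$ of $\mathbb{Z}_C$) gives pairwise disjoint symmetric sets of size $2k_i$ that partition $\mathbb{Z}_C \setminus \{0\}$.

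To construct $f_i$, root $T_i$ arbitrarily, set $f_i(\mathrm{root}) := 0$, and extend by BFS: at a non-root vertex $v$ with parent $p$, set $f_i(v) := f_i(p) + \delta_v$ where $\delta_v \in B_i$ is chosen to avoid (a) $-\delta_p$ when $p$ has a parent, forcing $f_i(v) \neq f_i(\mathrm{gp}(v))$, and (b) every $\delta_u$ already assigned to a sibling $u$ of $v$, forcing $f_i(v) \neq f_i(u)$. At most $\deg_{T_i}(p) - 1 \leq \Delta(T_i) - 1 < 2k_i = |B_i|$ elements are forbidden, so a valid $\delta_v$ always exists; the resulting $f_i$ is a proper colouring of $T_i^2$ whose edge-differences all lie in $B_i$.

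Finally, verifying that $c$ is proper on $G^2$ splits into two cases. If $v, w$ differ in only one coordinate $i$ (at distance $\leq 2$ in $T_i$), then $c(v) - c(w) = f_i(v_i) - f_i(w_i)$, a sum or difference of at most two elements of $B_i$; this is nonzero by the construction and has integer absolute value at most $2k_i < C$. If $v, w$ differ in two coordinates $i \neq j$ (so they share a common $G$-neighbour), then $c(v) - c(w) = \alpha + \beta$ with $\alpha \in B_i$, $\beta \in B_j$; disjointness and symmetry of the blocks give $\alpha \neq -\beta$, while $|\alpha + \beta| \leq k_i + k_j \leq k < C$. In both cases the difference is a nonzero integer of absolute value less than $C$, hence nonzero mod $C$. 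The principal obstacle is the tight balance $|B_i| = 2k_i$: the blocks must be large enough to support the BFS step (which forbids up to $\Delta(T_i) - 1$ elements) and must tile $\mathbb{Z}_C \setminus \{0\}$ so that cross-dimensional sums never hit $0$; the choice $C = 1 + 2\sum_i k_i$ is precisely what makes both requirements simultaneously achievable.
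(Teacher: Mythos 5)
Your proof is correct and follows essentially the same route as the paper: your symmetric blocks $B_i$ are exactly the paper's disjoint span intervals $[s_i+1,s_i+\ceil{\half\Delta(T_i)}]$ from its Lemma~\ref{ColourTree} (whose leaf-deletion induction is the same greedy count as your BFS step, forbidding at most $\Delta(T_i)-1$ of the $2k_i$ available differences), and your reduction modulo $C=2k+1$ is the paper's Lemma~\ref{Wrap} inlined. One small arithmetic slip in your final paragraph: an element of $B_i$ can have absolute value as large as $k$ (not $k_i$), so the correct bounds are $|f_i(v_i)-f_i(w_i)|\le 2k$ and $|\alpha+\beta|\le 2k$ rather than $2k_i$ and $k_i+k_j$ --- but $2k<C$ still holds, so the conclusion that a nonzero integer difference survives reduction modulo $C$ is unaffected.
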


This upper bound improves upon a similar bound by  \citet{JMV-IPL06},
who proved $\chi(G^2)\leq 1+2\sum_{i=1}^d(\Delta(T_i)-1)$, 
assuming that each $\Delta(T_i)\geq2$. Theorem~\ref{Main} implies:

\begin{corollary}
  Let $T_1,\dots,T_d$ be trees, such that $\Delta(T_i)$ is even for
  all $i\in[1,d]$. Let $G:=T_1\square T_2\square\cdots\square
  T_d$. Then $$\chi(G^2)=1+\sum_{i=1}^d \Delta(T_i)\enspace.$$
\end{corollary}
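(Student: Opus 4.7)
The corollary is an immediate consequence of Theorem~\ref{Main}, so the plan is essentially a one-line verification that the two bounds in Theorem~\ref{Main} coincide under the even-degree hypothesis; there is no real obstacle.

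First I would invoke Theorem~\ref{Main} to obtain
$$1+\sum_{i=1}^d \Delta(T_i) \;\leq\; \chi(G^2) \;\leq\; 1+2\sum_{i=1}^d \ceil{\half\Delta(T_i)}.$$
Next, I would observe that because each $\Delta(T_i)$ is even, the ceiling is exact: $\ceil{\half\Delta(T_i)} = \half\Delta(T_i)$, so $2\ceil{\half\Delta(T_i)} = \Delta(T_i)$ for every $i \in [1,d]$. Substituting this into the upper bound collapses it to $1+\sum_{i=1}^d \Delta(T_i)$, matching the lower bound. The sandwich forces equality $\chi(G^2) = 1+\sum_{i=1}^d \Delta(T_i)$, which is the desired conclusion.

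Since the work has already been done in Theorem~\ref{Main}, no new combinatorial construction, colouring scheme, or degree-counting argument is required here. The only "step" is the elementary observation that even integers make the ceiling function transparent. If anything subtle were to arise it would be confirming that Theorem~\ref{Main} applies without extra hypotheses (for instance that the trees are nontrivial), but the paper already stipulates in the introduction that every tree has at least one edge, so the theorem applies directly to $T_1,\dots,T_d$ as given.
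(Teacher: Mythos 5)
Your proposal is correct and matches the paper exactly: the paper states the corollary as an immediate consequence of Theorem~\ref{Main}, relying on the same observation that $2\ceil{\half\Delta(T_i)}=\Delta(T_i)$ when $\Delta(T_i)$ is even, so the upper and lower bounds coincide.
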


This corollary generalises a result of \citet{FGR-IPL03}, who proved
it when each $T_i$ is a path, and thus $G$ is a $d$-dimensional grid. See
\citep{SW10,FRR-JGT04,PorWood-Comb09,JMV-IPL06,CY08} for more related results.

\section{Proofs}

For a colouring $c$ of a graph $G$, the \emph{span} of an edge $vw$ of
$G$ is $|c(v)-c(w)|$.  The following lemma is well known; see
\citep{PorWood-Comb09} for example.

\begin{lemma}
  \label{Wrap} Let $G$ be a graph. If $G^2$ has a colouring in which
  every edge of $G$ has span at most $s$, then $G^2$ is
  $(2s+1)$-colourable.
\end{lemma}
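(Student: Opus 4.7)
The plan is to use the standard "wraparound" argument: given the integer colouring $c$ of $G^2$ in which every $G$-edge has span at most $s$, define a new colouring $c'(v) := c(v) \bmod (2s+1)$, which takes values in $\{0,1,\dots,2s\}$, and then verify that $c'$ is still a proper colouring of $G^2$.

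To verify properness, I would take an arbitrary edge $vw$ of $G^2$ and split into two cases according to whether $vw \in E(G)$ or whether $v,w$ merely share a common neighbour $u$ in $G$. In the first case the hypothesis gives $|c(v)-c(w)| \le s$, and in the second case the triangle-type estimate $|c(v)-c(w)| \le |c(v)-c(u)| + |c(u)-c(w)| \le 2s$ holds. In either case $|c(v)-c(w)|$ lies in $\{1,2,\dots,2s\}$ (it is nonzero because $c$ properly colours $G^2$), so $c(v)-c(w)$ is not a multiple of $2s+1$, hence $c'(v)\neq c'(w)$.

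There is no real obstacle here; the only thing to be careful about is that the nonzero hypothesis in both cases comes from $c$ being a proper colouring of $G^2$ (not merely of $G$), which is exactly what handles the ``common neighbour'' case. The bound $2s+1$ is tight for the modular reduction: $2s$ would fail because opposite ends of a path of length two could have span exactly $2s$ and collide modulo $2s$.
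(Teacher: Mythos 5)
Your proof is correct and is essentially the paper's argument: the paper observes that every edge of $G^2$ has span at most $2s$ (at most $s$ for edges of $G$, at most $2s$ by the triangle inequality for common-neighbour pairs) and then invokes the well-known equivalence between $(k+1)$-colourability and having a colouring with all spans at most $k$, whose standard proof is exactly the reduction modulo $2s+1$ that you write out explicitly. The only difference is that you unpack that black-box step; the substance is identical.
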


\begin{proof} It is well known that a graph is $(k+1)$-colourable
  if and only if it has a colouring in which every edge has span at
  most $k$; see \citep{PorWood-Comb09}. If $G^2$ has a colouring in
  which every edge of $G$ has span at most $s$, then every edge of
  $G^2$ has span at most $2s$ in the same colouring. Thus $G^2$ is
  $(2s+1)$-colourable.
\end{proof}

\begin{lemma}
  \label{ColourTree} For every tree $T$ and non-negative integer $s$,
  $T^2$ has a colouring such that every edge of $T$ has span in
  $[s+1,s+\ceil{\half\Delta(T)}]$.
\end{lemma}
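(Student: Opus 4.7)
The plan is to root $T$ at an arbitrary vertex $r$ and to colour its vertices greedily in BFS order. Let $k := \ceil{\half\Delta(T)}$, so that $2k \geq \Delta(T)$. Start by setting $c(r) := 0$. Then, processing each non-root vertex $v$ in non-decreasing order of distance from $r$, with $p$ denoting the parent of $v$ and $q$ denoting the parent of $p$ when $p \neq r$, I would assign $c(v)$ to be any integer of the form $c(p) \pm j$ with $j \in [s+1, s+k]$ that differs from $c(q)$ (when it exists) and from the colour of every already-coloured sibling of $v$.

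The main step is to verify that such a choice always exists. There are exactly $2k$ candidate values $c(p) \pm j$ with $j \in [s+1, s+k]$. When $p = r$, the forbidden values are the colours of the previously-coloured siblings of $v$, of which there are at most $\Delta(T) - 1$. When $p \neq r$, the forbidden values are $c(q)$ together with the colours of at most $\Delta(T) - 2$ previously-coloured siblings of $v$, for a total of at most $\Delta(T) - 1$ distinct forbidden values. Since $2k \geq \Delta(T)$, at least one valid choice remains in every case.

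Finally I would check that the resulting colouring is proper for $T^2$. Two vertices adjacent in $T^2$ either form an edge of $T$, in which case the span of at least $s+1 \geq 1$ guarantees distinct colours, or they share a common neighbour in $T$. Two distinct vertices with a common neighbour in a tree are either siblings (given distinct colours directly by the construction) or stand in a grandparent-grandchild relation (given distinct colours by the $c(v) \neq c(q)$ constraint imposed when the grandchild was coloured). Every edge of $T$ has span in $[s+1, s+\ceil{\half\Delta(T)}]$ by construction.

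The main obstacle is really just the counting in the greedy step: one must use $2k \geq \Delta(T)$ and count the grandparent together with the at-most-$\Delta(T){-}2$ already-coloured siblings without double-counting. Once this is pinned down the rest of the argument is routine bookkeeping.
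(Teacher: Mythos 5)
Your proof is correct and is essentially the paper's argument in a different guise: the paper proceeds by induction on deleting a leaf, which amounts to the same one-vertex-at-a-time greedy extension using the same candidate set of $2\ceil{\half\Delta(T)}\geq\Delta(T)$ values $c(p)\pm j$ with $j\in[s+1,s+\ceil{\half\Delta(T)}]$ against at most $\Delta(T)-1$ forbidden values. The only cosmetic difference is that you process vertices forward in BFS order and defer each sibling conflict to whichever sibling is coloured later, whereas the paper colours the new leaf after all other neighbours of its parent have already been coloured.
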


\begin{proof}
  We proceed by induction on $|V(T)|$. If $|V(T)|=2$ the result is
  trivial. Now assume that $|V(T)|\geq3$. Let $v$ be a leaf vertex of
  $T$. Let $w$ be the neighbour of $v$. By induction, $(T-v)^2$ has a
  colouring $c$ such that every edge of $T-v$ has span in
  $[s+1,s+\ceil{\half\Delta(T)}]$. Let $$X:=\{x\in\mathbb{Z}:|x|\in[s+1,s+\ceil{\half\Delta(T)}]\}\enspace.$$
  Each neighbour of $w$ in $T-v$ is coloured $c(w)+x$ for some $x\in
  X$.  Since $|X|\geq\Delta(T)$ and $w$ has degree less than
  $\Delta(T)$ in $T-v$, for some $x\in X$, no neighbour of $w$ is
  coloured $c(w)+x$.  Set $c(v):=c(w)+x$.  Thus $|c(v)-c(w)|=|x|\in
  [s+1,s+\ceil{\half\Delta(T)}]$. No two neighbours of $w$ receive the
  same colour. Hence $c$ is the desired colouring of $T$.
\end{proof}

\begin{proof}[Proof of Theorem~\ref{Main}]
  The lower bound is well known \citep{JMV-IPL06}.  In particular, for
  $i\in[1,d]$, let $v_i$ be a vertex of maximum degree in $T_i$.  Then
  $(v_1,\dots,v_d)$ has degree $\sum_i \Delta(T_i)$ in $G$.  This
  vertex and its neighbours in $G$ receive distinct colours in any
  colouring of $G^2$.  Thus $\chi(G^2)\geq 1+\sum_i \Delta(T_i)$.

  Now we prove the upper bound. Let $s_1:=0$ and
  $s_i:=\sum_{j=1}^{i-1} \ceil{\half\Delta(T_j)}$. By
  Lemma~\ref{ColourTree}, $T_i^2$ has a colouring $c_i$ such that
  every edge of $T_i$ has span in
  $[s_i+1,s_i+\ceil{\half\Delta(T_i)}]$. Thus the spans of edges in
  distinct trees are distinct.

  Colour each vertex $v=(v_1,\dots,v_d)$ of $G$ by
  $c(v):=\sum_{i=1}^dc_i(v_i)$.

  Suppose on the contrary that $c(v)=c(w)$ for some edge $vw$ of
  $G$. Say $vw$ is in dimension $i$. Thus $v_j=w_j$ for all $j\neq
  i$. Hence $c_i(v_i)=c_i(w_i)$, and $c_i$ is not a colouring of
  $G$. This contradiction proves that $c$ is a colouring of $G$.

  Suppose on the contrary that $c(x)=c(y)$ for two vertices $x$ and
  $y$ with a common neighbour $v$ in $G$. Say $vx$ is in dimension
  $i$, and $vy$ is in dimension $j$. Thus $v_\ell=x_\ell$ for all
  $\ell\neq i$, and $v_\ell=y_\ell$ for all $\ell\neq j$.  Now
  $c_i(x_i)-c_i(v_i)=c(x)-c(v)=c(y)-c(v)=c_j(y_j)-c_j(v_j)$.  Thus the
  edges $x_iv_i$ and $y_jv_j$ have the same span.  Since the spans of
  edges in distinct trees are distinct, $i=j$.  Hence
  $c_i(x_i)=c_i(y_i)$. However, $v_i$ is a common neighbour of $x_i$
  and $y_i$ in $T_i$, implying $c_i$ is not a colouring of
  $T_i^2$. This contradiction proves that $c$ is a colouring of $G^2$.

  Each edge of $G$ has span at most
  $\sum_{i=1}^d\ceil{\half\Delta(T_i)}$.  The result follows from
  Lemma~\ref{Wrap}.
\end{proof}


\def\cprime{$'$} \def\soft#1{\leavevmode\setbox0=\hbox{h}\dimen7=\ht0\advance
  \dimen7 by-1ex\relax\if t#1\relax\rlap{\raise.6\dimen7
  \hbox{\kern.3ex\char'47}}#1\relax\else\if T#1\relax
  \rlap{\raise.5\dimen7\hbox{\kern1.3ex\char'47}}#1\relax \else\if
  d#1\relax\rlap{\raise.5\dimen7\hbox{\kern.9ex \char'47}}#1\relax\else\if
  D#1\relax\rlap{\raise.5\dimen7 \hbox{\kern1.4ex\char'47}}#1\relax\else\if
  l#1\relax \rlap{\raise.5\dimen7\hbox{\kern.4ex\char'47}}#1\relax \else\if
  L#1\relax\rlap{\raise.5\dimen7\hbox{\kern.7ex
  \char'47}}#1\relax\else\message{accent \string\soft \space #1 not
  defined!}#1\relax\fi\fi\fi\fi\fi\fi} \def\Dbar{\leavevmode\lower.6ex\hbox to
  0pt{\hskip-.23ex\accent"16\hss}D}

\end{document}